\newtheorem{theorem}{Theorem}
\newtheorem{lemma}[theorem]{Lemma}
\newtheorem{remark}[theorem]{Remark}
\newenvironment{proof}[1][Proof]{\textbf{#1.} }{\ \rule{0.5em}{0.5em}}
\begin{document}
			
			\title{ Helicity  in dispersive fluid mechanics}
			\author{S. L. Gavrilyuk$^{\ast }$  and H. Gouin\thanks{Aix Marseille University, CNRS, IUSTI, UMR 7343, Marseille, France.\newline  E-mails: sergey.gavrilyuk@univ-amu.fr; henri.gouin@ens-lyon.org;henri.gouin@univ-amu.fr}}
			
			\maketitle

			\begin{abstract}
				\noindent  By dispersive models of fluid mechanics  we are referring to the Euler-Lagrange equations for the  constrained Hamilton action functional  where  the internal  energy depends on high order derivatives of unknowns. The mass conservation law is considered as a constraint.  The corresponding Euler-Lagrange equations  include, in particular,  the van der Waals--Korteweg model  of capillary fluids, the model of fluids containing small gas bubbles and  the model describing long free-surface gravity waves. We obtain new conservation laws  generalizing  the helicity conservation  for classical barotropic fluids.  
			\end{abstract}
			
			{\bf Keywords:}
			Dispersive fluid mechanics;  capillary fluids; bubbly fluids; long free-surface gravity waves

			\section{Introduction}

			The helicity integral was first discovered for classical barotropic fluids in seminal papers by  J. J. Moreau \cite{Moreau} and   H. K.  Moffatt   \cite{Moffatt_1}. As the Kelvin circulation theorem,  conservation of helicity results from the particle re-labeling symmetry   \cite{Gouin_1976,Salmon1988,Salmon, Araki_2015}.  Further important studies of such  invariants as well as   Ertel's type  invariants for the incompressible and compressible Euler equations, magnetohydrodynamics etc.  can be found in the literature \cite{Tur_1993,Arnold,Cotter_2013,Webb_2014_II,Cheviakov_2014,Moffatt_2,Irvine,D.Serre}.
			
			  In this paper, we derive helicity   integrals   for dispersive media such as   fluids  endowed with capillarity (called also  Euler--van der Waals--Korteweg's fluids,  and a class of fluids with internal inertia (such as  bubbly fluids and  long  free-surface gravity waves). 
			
 From a mathematical point of view,  	we   concentrate here on two classes of  dispersive equations which are Euler–Lagrange equations for Hamilton’s
			action with a Lagrangian depending not only on the thermodynamic variables but also on their first spatial and time derivatives.
			
			 The first class of the models (the Lagrangian depends on the density
			gradient) is often called {\it the second gradient models}, and the corresponding dispersive terms are called {\it capillary type dispersion terms}.  Indeed, the density is linked directly to the determinant of the deformation gradient, so the density gradient thus linked to second derivatives of the deformation. Historically, such models appeared in the description of  regions with non-uniform   density  and described the surface tension effects    due to the van der Waals forces between  fluid molecules. The dependence on the  density gradient describes  in some sense  the micro-structure of such a non-homogeneous continuum. Such class of models thus describes a continuum with  a characteristic micro-scale length \cite{Casal1,Eglit, Truskinovsky,Casal2,Gavrilyuk_Shugrin_1996,Benzoni-Gavage,Garajeu_Gouin_Saccomandi,Gouin_Saccomandi,Haspot_2018,Bresch,Gavrilyuk3}. 
			
			For the second class of models, the Lagrangian depends on the material
			derivative  of the density (the dependence on the material derivative, and not just  on the partial time derivative,  is important to guarantee the Galilean invariance of the governing equations). This type of dispersion is often called {\it  inertia type dispersion}. Such class of models  contains thus  a characteristic micro-scale time  \cite{Salmon,Iordansky,Kogarko,Wijngaarden,Gavrilyuk_Teshukov,Gavrilyuk_2011,Serre_1953, Green_Naghdi_2,Lannes_2013}.
		
	These two classes of  models can obviously be unified in a general setting \cite{Gavrilyuk2}. However, for  specific applications (Euler--van der Waals--Korteweg fluids, fluids containing small gas bubbles, etc. ) the governing equations describe  dispersive effects separately: they take into account either the micro-scale structure or a micro-inertia structure. We will therefore study these two classes of models separately. \\

			We introduce the following notations. 	For any vectors $\boldsymbol{a}$ and $\boldsymbol{b}$, the scalar product  $\boldsymbol{a}\cdot \boldsymbol{b}$ is written   $\boldsymbol{a}^{T}\boldsymbol{b}$  
			(line vector or covector $\boldsymbol{a}^{T}$
			is multiplied by column vector $\boldsymbol{b}$), where superscript $^T$ denotes the transposition. The tensor product $\boldsymbol{a}\otimes \boldsymbol{b}$  is also  written $\boldsymbol{a} { \ }\boldsymbol{b}^{T}$.

			The product of   matrix  $\boldsymbol A$   by  vector   ${\boldsymbol a}$
			is denoted by   $\boldsymbol A\,  {\boldsymbol a} $. Notation   ${\boldsymbol
				b}^T \, \boldsymbol A\, $ means the covector   ${\boldsymbol c}^T $ defined
			as ${\boldsymbol c}^T = ( \boldsymbol A^T\,  {\boldsymbol b})^T$. The mixed  product of the vectors ${\boldsymbol a}$, ${\boldsymbol b}$, ${\boldsymbol c}$   is denoted $ \,\ \boldsymbol a^T(\boldsymbol b \times\boldsymbol c)={\boldsymbol a}\cdot(\boldsymbol b \times            \boldsymbol c)={\rm det}({\boldsymbol a},{\boldsymbol b},{\boldsymbol c})$\,\ where the sign `$\times$'  means the vector product. Tensor $\boldsymbol{I}$ denotes the identity transformation.
			
			The gradient of  scalar function $f(\boldsymbol x)$ is defined  by  $\nabla f=\displaystyle\left(\frac{\partial f}{\partial
				{\boldsymbol x}}\right)^T$ associated with  linear form $df=\displaystyle  \displaystyle\left(\frac{\partial f}{\partial
				{\boldsymbol x}}\right)\,d{\boldsymbol x}$.
			
			Let $\boldsymbol v=\left(v^1,...,v^n\right)^T$ be a function of  $\boldsymbol x=\left(x^1,...,x^n\right)^T$. Then\  $\displaystyle\frac{\partial  \boldsymbol v} {\partial  \boldsymbol x}$\ denotes the linear application defined by the relation $\displaystyle d\boldsymbol v= \frac{\partial \boldsymbol v} {\partial \boldsymbol x}\, d\boldsymbol x$ and represented   by the  matrix
			\begin{equation*}
				\displaystyle
				\left(
				\begin{array}{ccc}
					\displaystyle\frac{\partial v^{1}(x^{1},...,x^{n})}{\partial x^{1}}  & ... &
					\displaystyle\frac{\partial v^{1}(x^{1},...,x^{n})}{\displaystyle\partial
						x^{n}} \\
					\vdots &  & \vdots \\
					\displaystyle\frac{\partial v^{n}(x^{1},...,x^{n})}{\partial x^{1}} & ... & %
					\displaystyle\frac{\partial v^{n}(x^{1},...,x^{n})}{\partial x^{n}}%
				\end{array}%
				\right) .
			\end{equation*}\\
			
			The
			divergence of second order  tensor   $\boldsymbol A$   is the
			covector  $ {\rm div}\, \boldsymbol A  $ such that, for any constant vector
			${\boldsymbol a}$, $ ({\rm div}\, \boldsymbol  A)\, {\boldsymbol a}    =   {\rm
				div }\, (\boldsymbol A\ {\boldsymbol a})$. It implies that for any vector
			field $\boldsymbol{v}$  
			\begin{equation*}
				\text{div}( {\boldsymbol A}\,\boldsymbol{v})=
				(\text{div }{\boldsymbol A})\,\boldsymbol{v}+ \text{Tr}\left({\boldsymbol A}\,\dfrac{\partial
					\boldsymbol{v}}{\partial \boldsymbol{x}}\right),
			\end{equation*}
			where $\text{Tr}$ is the trace operator. \\
			
			In three-dimensional case  we introduce vorticity  $\boldsymbol \omega= {\rm curl}\,  \boldsymbol u$. In the  Cartesian basis  $\boldsymbol \omega=(\omega^1,\omega^2, \omega^3)^T$ is defined by
			\begin{equation*}
				\frac{\partial \boldsymbol  u}{\partial \boldsymbol {x}}-	\left(\frac{\partial \boldsymbol  u}{\partial \boldsymbol x}\right)^T=\displaystyle
				\left(
				\begin{array}{ccc}
					\displaystyle 0  & -\omega^3 &\omega^2\\
					\omega^3 & 0 & -\omega^1 \\
					-\omega^2 &\omega^1 & 0
				\end{array}%
				\right).
			\end{equation*}
			
%

			\section{Fluid motion}

			A fluid motion is 
			represented by  a diffeomorphism  $\boldsymbol{\varphi}$ of a
			three-dimensional reference configuration $\mathcal D_0$ into the physical space $\mathcal D_t$. Let 
			$\boldsymbol{X}=(X^{1},X^{2},X^{3})^T$  be  the Lagrangian coordinates,  and $\boldsymbol{x}=(x^{1},x^{2},x^{3})^T$ be  the Eulerian coordinates. Fluid motion is   
			\begin{equation*}
			\boldsymbol{x}= {\boldsymbol\varphi}\left(t, \boldsymbol{X}\right),
				\label{motion}
			\end{equation*}
			where $t$ denotes the time.  At a given time $t$, the transformation ${\boldsymbol\varphi}$
			possesses an inverse and has continuous derivatives up to the second
			order. The deformation gradient is  
			\begin{equation*}
				{\boldsymbol F}=\frac{\partial  {\boldsymbol\varphi(t,\boldsymbol X)}}{\partial \boldsymbol X} \equiv\frac{\partial  {\boldsymbol x}}{\partial \boldsymbol X}.
				\label{deformation_gradient}
			\end{equation*}
			Considering $ {\boldsymbol F}$ as a function of the Eulerian coordinates, we obtain  the evolution equation 
			\begin{equation}
				\frac{D{\boldsymbol F}}{Dt}= \frac{\partial \boldsymbol u}{\partial \boldsymbol x}{\boldsymbol F} \quad {\rm with}\quad\frac{D}{D t}=\frac{\partial}{\partial t}+{\boldsymbol u}^T \nabla,\label{derF}
			\end{equation}
			where $\displaystyle\boldsymbol u (t, \boldsymbol x)=\left.\frac{\partial  {\boldsymbol\varphi(t,\boldsymbol X)}}{\partial t}\right\vert_{\boldsymbol X=\boldsymbol\varphi^{-1}(t, \boldsymbol x)}$ is  the particle velocity.
			
			\section{Basic lemmas}
			\begin{lemma}
				\label{lemma_1}
				The following identities are satisfied:
				\begin{equation}
					{\rm div}\left(\frac{\boldsymbol F}{{\rm det}\, \boldsymbol{F}}\right)=0,
					\label{first}
				\end{equation}
				\begin{equation}
					\frac{D}{Dt}\left(\frac{\boldsymbol F}{\rm det\, \boldsymbol F}\right)= \left(\frac{\partial \boldsymbol{u}}{\partial \boldsymbol{x}}-({\rm div}\, \boldsymbol u)\,\boldsymbol{I}\right)\, \frac{\boldsymbol F}{\rm det\, \boldsymbol F}.
					\label{second}
				\end{equation}
			\end{lemma}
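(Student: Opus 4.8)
The plan is to establish the two identities by quite different means: \eqref{first} is a Piola-type identity expressing a purely algebraic (kinematic) property of Jacobian matrices, whereas \eqref{second} is a short differentiation exercise based on the evolution law \eqref{derF} and Jacobi's formula for the derivative of a determinant.

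For \eqref{first}, the first step is to recognize $\boldsymbol{F}/\det\boldsymbol{F}$ as the cofactor matrix of the \emph{inverse} Jacobian $\boldsymbol{F}^{-1}=\partial\boldsymbol{X}/\partial\boldsymbol{x}$, regarded as a field over the Eulerian coordinates: since $\det(\boldsymbol{F}^{-1})=1/\det\boldsymbol{F}$ and $\operatorname{cof}(\boldsymbol{F}^{-1})=\det(\boldsymbol{F}^{-1})\,(\boldsymbol{F}^{-1})^{-T}=(1/\det\boldsymbol{F})\,\boldsymbol{F}^{T}$, one gets the componentwise identification $(\boldsymbol{F}/\det\boldsymbol{F})_{i\alpha}=\operatorname{cof}(\boldsymbol{F}^{-1})_{\alpha i}$. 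With the convention recalled in the Introduction — the divergence of a tensor contracts its \emph{first} index — the $\alpha$-th component of $\operatorname{div}(\boldsymbol{F}/\det\boldsymbol{F})$ is then $\sum_i\partial_{x^i}\operatorname{cof}(\boldsymbol{F}^{-1})_{\alpha i}$, i.e. the ordinary divergence of the $\alpha$-th row of $\operatorname{cof}(\boldsymbol{F}^{-1})$. The second step uses that the rows of $\boldsymbol{F}^{-1}$ are exactly the gradients $\nabla X^{1},\nabla X^{2},\nabla X^{3}$, so in three dimensions the $\alpha$-th row of $\operatorname{cof}(\boldsymbol{F}^{-1})$ equals $\pm\,\nabla X^{\beta}\times\nabla X^{\gamma}$, with $(\alpha,\beta,\gamma)$ a permutation of $(1,2,3)$ and the sign depending only on $\alpha$. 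The third step is the elementary identity $\operatorname{div}(\nabla f\times\nabla g)=0$ for smooth scalars $f,g$ (immediate from $\nabla f\times\nabla g=\operatorname{curl}(f\,\nabla g)$ and $\operatorname{div}\operatorname{curl}=0$); applied with $f=X^{\beta}$, $g=X^{\gamma}$ it gives \eqref{first}. Equivalently, one may simply cite the classical Piola identity for the map $\boldsymbol{x}\mapsto\boldsymbol{X}$.

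For \eqref{second}, I would differentiate the product $\dfrac{\boldsymbol{F}}{\det\boldsymbol{F}}=(\det\boldsymbol{F})^{-1}\boldsymbol{F}$ along the trajectories. From \eqref{derF}, $\dfrac{D\boldsymbol{F}}{Dt}=\dfrac{\partial\boldsymbol{u}}{\partial\boldsymbol{x}}\,\boldsymbol{F}$; and by Jacobi's formula for the derivative of a determinant, $\dfrac{D(\det\boldsymbol{F})}{Dt}=\det\boldsymbol{F}\;\operatorname{Tr}\!\Big(\boldsymbol{F}^{-1}\dfrac{D\boldsymbol{F}}{Dt}\Big)=\det\boldsymbol{F}\;\operatorname{Tr}\!\Big(\dfrac{\partial\boldsymbol{u}}{\partial\boldsymbol{x}}\Big)=\det\boldsymbol{F}\,\operatorname{div}\boldsymbol{u}$. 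Combining these with the Leibniz rule,
\[
\frac{D}{Dt}\!\left(\frac{\boldsymbol{F}}{\det\boldsymbol{F}}\right)
=\frac{1}{\det\boldsymbol{F}}\,\frac{\partial\boldsymbol{u}}{\partial\boldsymbol{x}}\,\boldsymbol{F}
-\frac{\operatorname{div}\boldsymbol{u}}{\det\boldsymbol{F}}\,\boldsymbol{F}
=\left(\frac{\partial\boldsymbol{u}}{\partial\boldsymbol{x}}-(\operatorname{div}\boldsymbol{u})\,\boldsymbol{I}\right)\frac{\boldsymbol{F}}{\det\boldsymbol{F}},
\]
which is \eqref{second}.

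The only place where care is really needed is the index bookkeeping in \eqref{first}: one must match the paper's convention that $\operatorname{div}$ acts on the first index, and identify $\boldsymbol{F}/\det\boldsymbol{F}$ with the cofactor matrix of $\boldsymbol{F}^{-1}$ (not of $\boldsymbol{F}$), so that the ``rows'' hit by the divergence are genuinely gradients of the Lagrangian coordinates and the vanishing becomes transparent. Once that is set up correctly the analytic content is essentially nil, and \eqref{second} is in any case a completely routine computation.
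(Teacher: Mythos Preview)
Your argument for \eqref{second} is identical to the paper's: Leibniz rule, the evolution law \eqref{derF}, and the Euler--Jacobi formula $\dfrac{D(\det\boldsymbol F)}{Dt}=(\det\boldsymbol F)\,\operatorname{div}\boldsymbol u$.

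For \eqref{first} you take a genuinely different route. The paper argues integrally: for an arbitrary reference vector field $\boldsymbol v_0$ it computes $\displaystyle\iiint_{\mathcal D_t}\operatorname{div}\!\left(\dfrac{\boldsymbol F}{\det\boldsymbol F}\,\boldsymbol v_0\right)dD$ via the divergence theorem, uses the surface transport rule $d_1\boldsymbol x\times d_2\boldsymbol x=(\det\boldsymbol F)\,\boldsymbol F^{-T}(d_{10}\boldsymbol X\times d_{20}\boldsymbol X)$ to pull the boundary integral back to $\mathcal D_0$, and obtains $(\det\boldsymbol F)\,\operatorname{div}\!\left(\dfrac{\boldsymbol F}{\det\boldsymbol F}\,\boldsymbol v_0\right)=\operatorname{div}_0\boldsymbol v_0$; taking $\boldsymbol v_0$ constant yields \eqref{first}. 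Your approach is the direct differential one: identify the columns of $\boldsymbol F/\det\boldsymbol F$ with cross products $\nabla X^\beta\times\nabla X^\gamma$ of Lagrangian-coordinate gradients, then invoke $\operatorname{div}(\nabla f\times\nabla g)=0$. This is correct and shorter; it is in fact exactly the content of the paper's Lemma~\ref{lemma_2}, which the authors present immediately afterward as a ``geometrical interpretation'' of \eqref{first}. What the paper's integral argument buys is the slightly stronger transport identity $(\det\boldsymbol F)\,\operatorname{div}\!\left(\dfrac{\boldsymbol F}{\det\boldsymbol F}\,\boldsymbol v_0\right)=\operatorname{div}_0\boldsymbol v_0$ for arbitrary $\boldsymbol v_0$, and a formulation that does not rely on the three-dimensional cross-product structure. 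One small cosmetic point: with $(\alpha,\beta,\gamma)$ taken as the \emph{even} permutation of $(1,2,3)$, the sign in your cross-product formula is always $+$; the ``$\pm$ depending on $\alpha$'' is unnecessary.
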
   
			
			\begin{proof}
	We consider a vector field $\boldsymbol v_0$ on $\mathcal D_0$ of boundary $  S_0$ and their images  $\mathcal D_t$ of boundary  $S_t$. We obtain
				\begin{equation*}
					\iiint_{\mathcal D_t}{\rm div}\left(\frac{\boldsymbol F}{{\rm det}\, \boldsymbol{F}}\, \boldsymbol v_0\right) dD=\iiint_{\mathcal D_0}({\rm det}\,\boldsymbol F)\,{\rm div}\left(\frac{\boldsymbol F}{{\rm det}\, \boldsymbol{F}}\, \boldsymbol v_0\right) dD_0.
				\end{equation*}
		Let $\boldsymbol X\left(u_{10},u_{20}\right)$ be the parametrization of the surface $S_0$ in the reference space, and $\boldsymbol x\left(u_{1},u_{2}\right)$ the parametrization of its image, $S_t$, in the actual space.  \\
			We denote by $\displaystyle d_{10}\boldsymbol X=\frac{\partial\boldsymbol X}{\partial u_{10}}\; du_{10}$, $\displaystyle d_{20}\boldsymbol X=\frac{\partial\boldsymbol X}{\partial u_{20}}\; du_{20}$  the  infinitesimal tangent  vectors  along the  coordinate lines  on $S_0$, and $\displaystyle d_{1}\boldsymbol x=\frac{\partial\boldsymbol x}{\partial u_{1}}\; du_{1}$, $\displaystyle d_{2}\boldsymbol x=\frac{\partial\boldsymbol x}{\partial u_{2}}\; du_{2}$ the  infinitesimal tangent vectors  along the corresponding material lines on $ S_t$.\\
		For any constant vector $\boldsymbol a$,
		\begin{equation*}
			\boldsymbol a^T(d_1\boldsymbol x\times d_2\boldsymbol x)={\rm det}\left(\boldsymbol a, d_1\boldsymbol x, d_2\boldsymbol x
			\right)={\rm det}\left(\boldsymbol a, \boldsymbol F d_{10}\boldsymbol X, \boldsymbol F d_{20}\boldsymbol X
			\right)
			\end{equation*}
			\begin{equation*}
			= ({\rm det}\boldsymbol F)\,\,{\rm det}\left(\boldsymbol F^{-1}\boldsymbol a,d_{10}\boldsymbol X, d_{20}\boldsymbol X\right)= ({\rm det}\boldsymbol F)\,\,\left(\boldsymbol F^{-1}\boldsymbol a\right)^T\left(d_{10}\boldsymbol X\times d_{20}\boldsymbol X\right).
		\end{equation*}
		This gives us the relationship obtained in \cite{Gurtin_Fried_Anand_2007}, p. 77 :
		\begin{equation}
			d_{1}\boldsymbol x\times d_{2}\boldsymbol x = ({\rm det}\boldsymbol F)\, \boldsymbol F^{-T}	\left(d_{10}\boldsymbol X\times d_{20}\boldsymbol X\right).\label{key1}
		\end{equation}
		From Eq. \eqref{key1} and Gauss--Ostrogradsky's formula, we obtain 
		\begin{equation*}
			\iiint_{\mathcal D_t}{\rm div}\left(\frac{\boldsymbol F}{{\rm det}\, \boldsymbol{F}}\, \boldsymbol v_0\right) dD= \iint_{ S_t} {\rm det}\left(\frac{\boldsymbol F}{{\rm det}\, \boldsymbol{F}}\, \boldsymbol v_0, d_1\boldsymbol x, d_2\boldsymbol x\right)
		\end{equation*}
		\begin{equation*}
		 =\iint_{ S_0} {\rm det}\left(\frac{\boldsymbol F}{{\rm det}\, \boldsymbol{F}}\, \boldsymbol v_0,{\boldsymbol F}\, d_{10}\boldsymbol X, {\boldsymbol F}\,d_{20}\boldsymbol X\right) 
		\end{equation*}
		\begin{equation*}
			= \iint_{ S_0}{\rm det}\left(\boldsymbol v_0,d_{10}\boldsymbol X,d_{20}\boldsymbol X\right)  =\iiint_{\mathcal D_0}({\rm div_0}\,{\boldsymbol v_0})\ dD_0. 
		\end{equation*}\\
	
\noindent Consequently,

	\begin{equation*}
		({{\rm det}\, \boldsymbol{F}}) \,{\rm div}\left(\frac{\boldsymbol F}{{\rm det}\, \boldsymbol{F}}\, \boldsymbol v_0\right)={\rm div}_0\, \boldsymbol v_0.
	\end{equation*}	
For all constant vector $\boldsymbol v_0$ on  ${\mathcal D_0}$, we get  \begin{equation*}{\rm div}\left(\frac{\boldsymbol F}{{\rm det}\, \boldsymbol{F}}\, \boldsymbol v_0\right) = {\rm div}\left(\frac{\boldsymbol F}{{\rm det}\, \boldsymbol{F}}\right)\,\, \boldsymbol v_0 = 0.
\end{equation*}
This implies \eqref{first}.\\

\noindent Finally, 	from \eqref{derF}   we obtain 
				\begin{equation*}
					\frac{D}{Dt}\left(\frac{\boldsymbol F}{{\rm det }\,\boldsymbol{F}}\right) = \frac{1}{{\rm det }\,\boldsymbol{F}}\frac{D{\boldsymbol F}}{Dt}-\frac{1}{({\rm det }\,\boldsymbol{F})^2}\,\frac{D({\rm det }\,\boldsymbol{F})}{Dt}\,\boldsymbol{F}.
				\end{equation*}
			The Euler-Jacobi   formula 
				\begin{equation*}
					\frac{D({\rm det}\,{\boldsymbol F})}{Dt}=({\rm det}\,{\boldsymbol F})\,{\rm div}{\boldsymbol{u}}
				\end{equation*}
			gives \eqref{second}.
			\end{proof}\\
		
			A simple  geometrical  interpretation of the identity \eqref{first} is given below.  
\begin{lemma}
	\label{lemma_2}
	Let $\boldsymbol{e}_i$ be a local curvilinear  basis (with lower indexes $i=1,2,3$ ),  $\boldsymbol{e}_i=\displaystyle\frac{\partial \boldsymbol{x}}{\partial X^i}=\frac{\partial \boldsymbol{\varphi}(t,\boldsymbol{X})}{\partial X^i}$,  expressed in the Eulerian coordinates, and $\boldsymbol{e}^i=\nabla X^i(t, \boldsymbol{x})$ (with upper indexes  $i=1,2,3$) be  the corresponding dual basis  (cobasis). Then 
	\begin{equation*}  
		\frac{\boldsymbol{e}_k}{{\rm det}\,{\boldsymbol F}}={\boldsymbol{e}^i}\times{\boldsymbol{e}^j}, 
	\end{equation*}
	where $\{i,j,k\}$ is  an even permutation of  $\{1,2,3\}$. Moreover, 
	\begin{equation*}  
		{\rm div }\left(\frac{\boldsymbol{e}_k}{{\rm det}\,{\boldsymbol F}}\right)=0. 
	\end{equation*}
\end{lemma}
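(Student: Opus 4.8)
The plan is to identify the two bases with the columns of $\boldsymbol F$ and of $\boldsymbol F^{-T}$, and then reduce both assertions to elementary facts. First I would note that, by definition, $\boldsymbol e_i=\partial\boldsymbol x/\partial X^i$ is exactly the $i$-th column of the deformation gradient $\boldsymbol F$, so that $\det(\boldsymbol e_1,\boldsymbol e_2,\boldsymbol e_3)=\det\boldsymbol F$. Likewise $\boldsymbol e^i=\nabla X^i$ has components $\partial X^i/\partial x^j$, which are the entries of $\boldsymbol F^{-1}$; hence $\boldsymbol e^i$ is the $i$-th column of $\boldsymbol F^{-T}$ and $\det(\boldsymbol e^1,\boldsymbol e^2,\boldsymbol e^3)=\det\boldsymbol F^{-1}=1/\det\boldsymbol F$. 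The chain rule $(\partial X^i/\partial x^k)(\partial x^k/\partial X^j)=\delta^i_j$ furnishes the duality relations $\boldsymbol e^i\cdot\boldsymbol e_j=\delta^i_j$, which drive everything that follows.

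For the first identity, fix an even permutation $\{i,j,k\}$ of $\{1,2,3\}$. The vector $\boldsymbol e^i\times\boldsymbol e^j$ is orthogonal to both $\boldsymbol e^i$ and $\boldsymbol e^j$; but so is $\boldsymbol e_k$, since $\boldsymbol e_k\cdot\boldsymbol e^i=\delta^i_k=0$ and $\boldsymbol e_k\cdot\boldsymbol e^j=\delta^j_k=0$. As $\boldsymbol e^i$ and $\boldsymbol e^j$ are linearly independent, their orthogonal complement is one-dimensional, whence $\boldsymbol e^i\times\boldsymbol e^j=\lambda\,\boldsymbol e_k$ for some scalar $\lambda$. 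Taking the scalar product with $\boldsymbol e^k$ and using $\boldsymbol e_k\cdot\boldsymbol e^k=1$ gives $\lambda=\boldsymbol e^k\cdot(\boldsymbol e^i\times\boldsymbol e^j)=\det(\boldsymbol e^i,\boldsymbol e^j,\boldsymbol e^k)=\det(\boldsymbol e^1,\boldsymbol e^2,\boldsymbol e^3)=1/\det\boldsymbol F$, which is the claimed formula. Alternatively, one can read it directly off \eqref{key1} by choosing the surface parametrization so that the coordinate lines on $S_0$ are the $X^i$- and $X^j$-coordinate lines, in which case $d_{10}\boldsymbol X\times d_{20}\boldsymbol X$ points along the $k$-th reference axis and $\boldsymbol F^{-T}$ applied to it produces $\boldsymbol e^k$.

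For the second identity there are two equally short routes. The cleanest uses the first part: $\boldsymbol e_k/\det\boldsymbol F=\boldsymbol e^i\times\boldsymbol e^j=\nabla X^i\times\nabla X^j$, and the vector identity ${\rm div}(\boldsymbol a\times\boldsymbol b)=\boldsymbol b\cdot{\rm curl}\,\boldsymbol a-\boldsymbol a\cdot{\rm curl}\,\boldsymbol b$ together with ${\rm curl}\,\nabla f=0$ yields ${\rm div}(\nabla X^i\times\nabla X^j)=0$. Equivalently, one may invoke \eqref{first} directly: since $\boldsymbol e_k/\det\boldsymbol F$ is the $k$-th column of $\boldsymbol F/\det\boldsymbol F$, applying the definition of the divergence of a tensor to the constant $k$-th vector of the canonical basis turns \eqref{first} into ${\rm div}(\boldsymbol e_k/\det\boldsymbol F)=0$.

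I expect no genuine obstacle here; the only thing to watch is the bookkeeping of conventions — column versus row (covector) vectors, whether the dual basis sits in the columns of $\boldsymbol F^{-1}$ or of $\boldsymbol F^{-T}$, and the sign attached to the permutation $\{i,j,k\}$ — all of which must be kept consistent with the transposition conventions fixed in the Introduction.
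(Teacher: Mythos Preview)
Your proof is correct and follows essentially the same approach as the paper: both derive the duality relation $\boldsymbol e^i\cdot\boldsymbol e_j=\delta^i_j$ from the chain rule and use it to obtain the cross-product formula, and both dispatch the divergence identity via ${\rm div}(\nabla X^i\times\nabla X^j)=0$ using ${\rm curl}\,\nabla f=0$. The paper's argument for the first formula is extremely terse (it merely asserts that it ``comes from'' the duality identity), while you spell out the standard orthogonality-plus-normalization argument to pin down the scalar $\lambda$; this is almost certainly what the authors had in mind.
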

\begin{proof}
	The proof of the first formula comes from the identity  
	\begin{equation*}
\delta^i_j=	\frac{\partial X^i}{\partial X^j}=\frac{\partial X^i}{\partial \boldsymbol x}\frac{\partial \boldsymbol x}{\partial X^j}=	{\boldsymbol{e}^i}^T{\boldsymbol{e}_j},
	\end{equation*}
where $\delta_j^i$ is the  Kronecker symbol. The second formula comes from the identity 
	\begin{equation*}  
		{\rm div }\left(\frac{\boldsymbol{e}_k}{{\rm det}\,{\boldsymbol F}}\right)={\rm div }\left({\boldsymbol{e}^i}\times{\boldsymbol{e}^j}\right)
	\end{equation*}
		\begin{equation*}  
	={\boldsymbol{e}^j}^T{{\rm curl}\;  {\boldsymbol e}^i}-{\boldsymbol{e}^i}^T{{\rm curl}\;  {\boldsymbol e}^j}=0.
\end{equation*}	
\end{proof}
%
%
 \begin{remark}
\noindent 	
 Let $\boldsymbol E_k =\displaystyle \frac{\boldsymbol{e}_k}{{\rm det}\,{\boldsymbol F}}$  be   the columns  of the matrix 
	$\displaystyle \frac{\boldsymbol F}{{\rm det }\,\boldsymbol{F}}$ :
	\begin{equation*}
		\frac{\boldsymbol F}{{\rm det }\,\boldsymbol{F}}=\left[\boldsymbol E_1,\boldsymbol E_2,\boldsymbol E_3\right]. 
	\end{equation*}
The equations \eqref{first}, \eqref{second} yield :
	\begin{equation}
		\frac{\partial {\boldsymbol{E}}_i}{\partial t}+ {\rm curl}\left({\boldsymbol{E}}_i\times {\boldsymbol{u}}\right)=0, \quad {\rm div}\,{\boldsymbol{E}}_i=0,\quad i=1,2,3.
		\label{E_equation}
	\end{equation}
This is   Helmholtz's equation which governs   the  vorticity in the   Euler equations.  
	Equations  \eqref{E_equation} express that the  Lie derivative of a two--form along the velocity  field $\boldsymbol u$ is zero (see Appendix \ref{Appendix A} for details) : 
		\begin{equation*}
d_{L_2}	{\boldsymbol E}_i\equiv\frac{D{\boldsymbol E}_i}{Dt}-\frac{\partial \boldsymbol{u}}{\partial \boldsymbol{x}}{\boldsymbol E}_i+({\rm div}\, \boldsymbol u)\;{\boldsymbol E}_i=0.
	\end{equation*}
 \end{remark}
\begin{lemma}
	\label{lemma_identity}
For any scalar field $G$ and   vector  fields $\boldsymbol K$,    ${\boldsymbol L}$  on $\mathcal D_t$  
such that ${\boldsymbol L}$ is divergence-free \,  (${\rm div}\,{\boldsymbol L}=0$),
	we have the identity  
	\begin{equation*}
		\begin{array}{ccc}
			\displaystyle	\frac{\partial}{\partial t}\left({\boldsymbol K}^T \,{\boldsymbol L}\right)+{\rm div}\left\{\left({\boldsymbol u}\, {\boldsymbol K}^T  + \left( G-\frac{|\boldsymbol u|^2}{2}\right)\boldsymbol I\right){\boldsymbol L}\right\}\equiv\\ \\
			\displaystyle \boldsymbol L^T  \left(\frac{D\boldsymbol K}{Dt}+ \left(\frac{\partial \boldsymbol u}{\partial\boldsymbol x}\right)^T\boldsymbol K+ \nabla\left(G- \frac{|\boldsymbol u|^2}{2}\right)\right)+\\ \\ \displaystyle \boldsymbol K^T\left(\frac{\partial \boldsymbol L}{\partial t}+\frac{\partial \boldsymbol L}{\partial \boldsymbol x}\,\boldsymbol u+ \boldsymbol L\,{\rm div}\,\boldsymbol u-\frac{\partial \boldsymbol u}{\partial \boldsymbol x}\,\boldsymbol L \right).
			\label{helicity4}
		\end{array}
	\end{equation*}
\end{lemma}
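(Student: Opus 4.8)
The asserted relation is a pointwise algebraic identity — it does not invoke any equation of motion — so the plan is simply to expand the left-hand side with the Leibniz rules for $\partial_t$, for the gradient and for the divergence, and then to regroup the result into the two blocks that appear on the right-hand side.

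First I would split the time derivative as $\partial_t(\boldsymbol K^T\boldsymbol L)=(\partial_t\boldsymbol K)^T\boldsymbol L+\boldsymbol K^T\partial_t\boldsymbol L$ and use that a scalar equals its own transpose to write $(\partial_t\boldsymbol K)^T\boldsymbol L=\boldsymbol L^T\partial_t\boldsymbol K$. Next I would observe that $(\boldsymbol u\,\boldsymbol K^T)\boldsymbol L=(\boldsymbol K^T\boldsymbol L)\,\boldsymbol u$ and $\big((G-|\boldsymbol u|^2/2)\boldsymbol I\big)\boldsymbol L=(G-|\boldsymbol u|^2/2)\,\boldsymbol L$, so the vector inside the divergence is $(\boldsymbol K^T\boldsymbol L)\,\boldsymbol u+(G-|\boldsymbol u|^2/2)\,\boldsymbol L$. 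Applying ${\rm div}(f\boldsymbol w)=f\,{\rm div}\,\boldsymbol w+\boldsymbol w^T\nabla f$ to each summand gives ${\rm div}\{(\boldsymbol K^T\boldsymbol L)\boldsymbol u\}=(\boldsymbol K^T\boldsymbol L)\,{\rm div}\,\boldsymbol u+\boldsymbol u^T\nabla(\boldsymbol K^T\boldsymbol L)$ and ${\rm div}\{(G-|\boldsymbol u|^2/2)\boldsymbol L\}=(G-|\boldsymbol u|^2/2)\,{\rm div}\,\boldsymbol L+\boldsymbol L^T\nabla(G-|\boldsymbol u|^2/2)$; this is the only step that uses the hypothesis, since ${\rm div}\,\boldsymbol L=0$ annihilates the first summand here.

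Then I would use $\nabla(\boldsymbol K^T\boldsymbol L)=\big(\partial\boldsymbol K/\partial\boldsymbol x\big)^T\boldsymbol L+\big(\partial\boldsymbol L/\partial\boldsymbol x\big)^T\boldsymbol K$, so that after transposing the resulting scalars $\boldsymbol u^T\nabla(\boldsymbol K^T\boldsymbol L)=\boldsymbol L^T\frac{\partial\boldsymbol K}{\partial\boldsymbol x}\boldsymbol u+\boldsymbol K^T\frac{\partial\boldsymbol L}{\partial\boldsymbol x}\boldsymbol u$. Collecting the contributions with left factor $\boldsymbol L^T$ yields $\boldsymbol L^T\big(\partial_t\boldsymbol K+\frac{\partial\boldsymbol K}{\partial\boldsymbol x}\boldsymbol u+\nabla(G-|\boldsymbol u|^2/2)\big)=\boldsymbol L^T\big(\frac{D\boldsymbol K}{Dt}+\nabla(G-|\boldsymbol u|^2/2)\big)$ by the definition of $D/Dt$, while those with left factor $\boldsymbol K^T$ yield $\boldsymbol K^T\big(\partial_t\boldsymbol L+\frac{\partial\boldsymbol L}{\partial\boldsymbol x}\boldsymbol u+\boldsymbol L\,{\rm div}\,\boldsymbol u\big)$.

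Comparing with the right-hand side, the first block is short of $\boldsymbol L^T\big(\partial\boldsymbol u/\partial\boldsymbol x\big)^T\boldsymbol K$ and the second block carries an extra $-\,\boldsymbol K^T\frac{\partial\boldsymbol u}{\partial\boldsymbol x}\boldsymbol L$; but $\boldsymbol L^T\big(\partial\boldsymbol u/\partial\boldsymbol x\big)^T\boldsymbol K$ and $\boldsymbol K^T\frac{\partial\boldsymbol u}{\partial\boldsymbol x}\boldsymbol L$ are equal, being transposes of one and the same scalar, so adding and subtracting this quantity turns the collected left-hand side into exactly the right-hand side. The computation is routine; the only point that requires care — and, in effect, the whole content of the lemma — is the consistent handling of the line/column (covector/vector) conventions fixed in the Notation, which is precisely what makes the two $\partial\boldsymbol u/\partial\boldsymbol x$ terms cancel. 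No estimate or limiting argument is involved.
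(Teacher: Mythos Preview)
Your argument is correct and follows essentially the same route as the paper's own proof: expand the left-hand side with the product rules for $\partial_t$ and ${\rm div}$, invoke ${\rm div}\,\boldsymbol L=0$, regroup into $\boldsymbol L^T(\cdots)$ and $\boldsymbol K^T(\cdots)$, and close by the scalar identity $\boldsymbol L^T(\partial\boldsymbol u/\partial\boldsymbol x)^T\boldsymbol K=\boldsymbol K^T(\partial\boldsymbol u/\partial\boldsymbol x)\boldsymbol L$. The only cosmetic difference is that the paper expands ${\rm div}(\boldsymbol u\,\boldsymbol K^T\boldsymbol L)$ directly in one step, whereas you first reduce it to ${\rm div}\big((\boldsymbol K^T\boldsymbol L)\boldsymbol u\big)$ and then apply the scalar-times-vector rule.
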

\begin{proof}
	Let us denote 
	\begin{equation*}
		A= \displaystyle	\frac{\partial}{\partial t}\left({\boldsymbol K}^T \,{\boldsymbol L}\right)+{\rm div}\left\{\left({\boldsymbol u}\, {\boldsymbol K}^T  + \left( G-\frac{1}{2} \, {\boldsymbol u}^T {\boldsymbol u}\right)\boldsymbol I\right){\boldsymbol L}\right\}.
	\end{equation*}
	From 
	\begin{equation*}
		{\rm div}\left(\boldsymbol u\, \boldsymbol K^T\boldsymbol L \right)= \boldsymbol K^T\boldsymbol L \left({\rm div}\,\boldsymbol u\right)+\boldsymbol K^T\frac{\partial \boldsymbol L}{\partial \boldsymbol x}\,\boldsymbol u+\boldsymbol L^T\frac{\partial \boldsymbol K}{\partial \boldsymbol x}\,\boldsymbol u 
	\end{equation*}
and 
	\begin{equation*}{\rm div \,{\boldsymbol L}}=0, \quad 	\frac{\partial\boldsymbol K}{\partial t}=\frac{D\boldsymbol K}{Dt}-\frac{\partial\boldsymbol K}{\partial\boldsymbol x}\, \boldsymbol u, 
	\end{equation*}   
we obtain  
	\begin{equation*}
		\begin{array}{ccc}
			\displaystyle A= {\boldsymbol K}^T \frac{\partial\boldsymbol L}{\partial t} + {\boldsymbol L}^T \frac{D\boldsymbol K}{Dt}-{\boldsymbol L}^T\frac{\partial \boldsymbol K}{\partial \boldsymbol x}\,\boldsymbol u+\boldsymbol K^T\boldsymbol L \left({\rm div}\,\boldsymbol u\right)\\ \\ \displaystyle+\boldsymbol K^T\frac{\partial \boldsymbol L}{\partial \boldsymbol x}\,\boldsymbol u+\boldsymbol L^T\frac{\partial \boldsymbol K}{\partial \boldsymbol x}\,\boldsymbol u+\frac{\partial}{\partial\boldsymbol x}\left( G-\frac{|\boldsymbol u|^2}{2}\right)\boldsymbol L.
		\end{array}
	\end{equation*}
Finally, from 
	\begin{equation*}{\boldsymbol L}^T\left(\frac{\partial \boldsymbol u}{\partial \boldsymbol x}\right)^T 	{\boldsymbol K}= {\boldsymbol K}^T\,\left(\frac{\partial \boldsymbol u}{\partial \boldsymbol x}\right)	{\boldsymbol L}
	\end{equation*}
we get  
	\begin{equation*}
		\begin{array}{ccc}
			\displaystyle A= {\boldsymbol L}^T \left(\frac{D\boldsymbol K}{Dt}+\left(\frac{\partial \boldsymbol u}{\partial \boldsymbol x}\right)^T	{\boldsymbol K} +\nabla\left( G-\frac{|\boldsymbol u|^2}{2}\right)\right)\\ \\
			\displaystyle +\,\boldsymbol K^T\left(\frac{\partial\boldsymbol L}{\partial t}+\frac{\partial\boldsymbol L}{\partial\boldsymbol x}\,\boldsymbol u+\boldsymbol L \left({\rm div}\,\boldsymbol u\right)-\frac{\partial\boldsymbol u}{\partial\boldsymbol x}\,\boldsymbol L \right).
		\end{array}
	\end{equation*}
\end{proof}
\begin{remark}
The equation
	\begin{equation}
	\frac{D\boldsymbol K}{Dt}+\left(\frac{\partial \boldsymbol u}{\partial \boldsymbol x}\right)^T	{\boldsymbol K} +\nabla\left( G-\frac{|\boldsymbol u|^2}{2}\right)=0,
	\label{K_equation}
\end{equation}
 it equivalent to 
\begin{equation*}
	d_{L_1}\boldsymbol K^T+\frac{\partial }{\partial \boldsymbol x}\left(G-\frac{|\boldsymbol u|^2}{2}\right)=0
\end{equation*}
where  
$\displaystyle
	d_{L_1}\boldsymbol K^T\equiv\frac{D{\boldsymbol K}^T}{Dt}+{\boldsymbol K}^T\left(\frac{\partial \boldsymbol u}{\partial \boldsymbol x}\right)$	
 is the Lie derivative of   covector ${\boldsymbol K}^T$ along the vector field $\boldsymbol u$ (see Appendix \ref{Appendix A} for details).
\end{remark} 			
\begin{theorem}
	\label{basic_theorem} 
	$1^o)$
 Consider a divergence--free  field  $\boldsymbol{L}$  satisfying the Helmholtz equation 
 \begin{equation*}
d_{L_2}	{\boldsymbol L}\equiv\frac{\partial \boldsymbol L}{\partial t}+\frac{\partial \boldsymbol L}{\partial \boldsymbol x} {\boldsymbol u}+{\boldsymbol L}\,{\rm div} \,{\boldsymbol u}-\frac{\partial \boldsymbol u}{\partial \boldsymbol x} {\boldsymbol L}=0,  
\label{L_equation}
\end{equation*}
 and the field $\boldsymbol K$ satisfying \eqref{K_equation}. Then,
	\begin{equation}
	\frac{\partial}{\partial t}\left({\boldsymbol K}^T \,{\boldsymbol L}\right)+{\rm div}\left\{\left({\boldsymbol u}\, {\boldsymbol K}^T  + \left( G-\frac{|\boldsymbol u|^2}{2}\right)\boldsymbol I\right){\boldsymbol L}\right\}=0.
	\label{K_L}
\end{equation}
$2^o)$  Consider  a material domain $\mathcal D_t$ of boundary $S_t$.  If  at $t=0$ the   divergence-free field  $\boldsymbol{L}$  is tangent to $S_0$, then   at all time  $\boldsymbol{L}$ is tangent to $S_t$.\\ Moreover,  the quantity
	\begin{equation}
	{\mathcal H}=\iiint_{D_t}\boldsymbol{K}^T\boldsymbol{L} \, dD,
	\label{helicity_integral}
	\end{equation}
	   we call {\it generalized helicity}, keeps a constant value along the  motion.
\end{theorem}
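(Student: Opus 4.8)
The plan is to reduce the three assertions to the algebraic identity of Lemma~\ref{lemma_identity}, the kinematic identities of Lemma~\ref{lemma_1}, and the transport (Reynolds) theorem; only the propagation of tangency will require a genuine argument. For $1^{o}$: in the identity of Lemma~\ref{lemma_identity} take $\boldsymbol K$ to be the given solution of \eqref{K_equation} and $\boldsymbol L$ the given divergence-free Helmholtz field. The first bracket on the right-hand side vanishes because $\boldsymbol K$ solves \eqref{K_equation}; the second bracket is precisely $d_{L_2}\boldsymbol L$, which vanishes by hypothesis. Hence the left-hand side of the identity, which is the left-hand side of \eqref{K_L}, is zero.

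For the propagation of tangency in $2^{o}$, I would rewrite the Helmholtz equation along particle paths as $\dfrac{D\boldsymbol L}{Dt}=\left(\dfrac{\partial\boldsymbol u}{\partial\boldsymbol x}-({\rm div}\,\boldsymbol u)\,\boldsymbol I\right)\boldsymbol L$; by \eqref{second} this is the same linear ordinary differential equation, along each trajectory, that is satisfied by the columns $\boldsymbol E_i$ of $\boldsymbol F/{\rm det}\,\boldsymbol F$. Since $\boldsymbol L_0(\boldsymbol X):=\boldsymbol L(0,\boldsymbol X)$ is constant along trajectories, $\dfrac{\boldsymbol F}{{\rm det}\,\boldsymbol F}\,\boldsymbol L_0$ solves the same equation with the same initial value, so by uniqueness $\boldsymbol L(t,\boldsymbol x)=\dfrac{1}{{\rm det}\,\boldsymbol F}\,\boldsymbol F\,\boldsymbol L_0(\boldsymbol X)$ (the ``frozen-in'' or Piola-transported representation; $\boldsymbol L$ stays divergence-free by the identity $({\rm det}\,\boldsymbol F)\,{\rm div}(\tfrac{\boldsymbol F}{{\rm det}\,\boldsymbol F}\,\boldsymbol v_0)={\rm div}_0\,\boldsymbol v_0$ from the proof of Lemma~\ref{lemma_1}). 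Combining this with the Nanson-type relation \eqref{key1} yields conservation of the flux of $\boldsymbol L$ across material surface elements, $\boldsymbol L^{T}(d_1\boldsymbol x\times d_2\boldsymbol x)=\boldsymbol L_0^{T}(d_{10}\boldsymbol X\times d_{20}\boldsymbol X)$. If $\boldsymbol L_0$ is tangent to $S_0$, the right-hand side vanishes for every pair of tangent directions on $S_0$, hence so does the left-hand side for every pair of tangent directions on $S_t$, i.e. $\boldsymbol L$ has zero normal component on $S_t$ for all $t$. (Alternatively, one checks directly that $\boldsymbol L^{T}\boldsymbol m$ is constant along trajectories, $\boldsymbol m:=({\rm det}\,\boldsymbol F)\,\boldsymbol F^{-T}\boldsymbol N_0$ being the unnormalized normal to $S_t$ carried by \eqref{key1} from a normal $\boldsymbol N_0$ of $S_0$, using $\dfrac{D\boldsymbol m}{Dt}=\left(({\rm div}\,\boldsymbol u)\,\boldsymbol I-(\partial\boldsymbol u/\partial\boldsymbol x)^{T}\right)\boldsymbol m$ together with the Helmholtz equation.)

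Finally, for the conservation of $\mathcal H$, I would apply the transport theorem to \eqref{helicity_integral} over the material domain $D_t$ (moving with velocity $\boldsymbol u$): $\dfrac{d\mathcal H}{dt}=\displaystyle\iiint_{D_t}\left(\dfrac{\partial}{\partial t}(\boldsymbol K^{T}\boldsymbol L)+{\rm div}\left((\boldsymbol K^{T}\boldsymbol L)\,\boldsymbol u\right)\right)dD$. Since $\boldsymbol u\,\boldsymbol K^{T}\boldsymbol L=(\boldsymbol K^{T}\boldsymbol L)\,\boldsymbol u$, the conservation law \eqref{K_L} shows the integrand equals $-{\rm div}\left((G-|\boldsymbol u|^2/2)\,\boldsymbol L\right)$, so Gauss--Ostrogradsky's formula gives $\dfrac{d\mathcal H}{dt}=-\displaystyle\iint_{S_t}(G-|\boldsymbol u|^2/2)\,\boldsymbol L^{T}\boldsymbol n\,dS=0$ because $\boldsymbol L$ is tangent to $S_t$. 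The main obstacle is the tangency step: everything else is substitution into Lemmas~\ref{lemma_identity} and~\ref{lemma_1} and the transport theorem, whereas there one must genuinely derive the frozen-in form of $\boldsymbol L$ (equivalently, the conservation of its surface flux) from the Helmholtz equation and the kinematic identities, rather than verify it a posteriori.
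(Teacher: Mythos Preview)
Your proposal is correct and, for part~$1^o$, takes exactly the paper's approach: the paper's entire proof of \eqref{K_L} is the single sentence ``The conservation law \eqref{K_L} is a consequence of Lemma~\ref{lemma_identity},'' and you have simply unpacked that substitution.

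For part~$2^o$ you go well beyond the paper, which merely asserts that the helicity integral ``immediately comes from \eqref{K_L}'' and does not prove the tangency propagation at all. Your argument---deriving the frozen-in representation $\boldsymbol L=\dfrac{\boldsymbol F}{\det\boldsymbol F}\,\boldsymbol L_0$ from the fact that $\boldsymbol L$ and the columns $\boldsymbol E_i$ satisfy the same linear ODE along trajectories (Lemma~\ref{lemma_1}, equation~\eqref{second}), then combining with the Nanson relation~\eqref{key1} to obtain conservation of surface flux and hence preservation of tangency---is correct and is in fact the natural way to fill the gap using only tools already developed in the paper (Lemmas~\ref{lemma_1}--\ref{lemma_2} and Appendix~\ref{Appendix A}). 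Likewise your Reynolds-transport derivation of $\dfrac{d\mathcal H}{dt}=0$, splitting the flux in \eqref{K_L} into the advective part $(\boldsymbol K^T\boldsymbol L)\,\boldsymbol u$ and the residual $(G-|\boldsymbol u|^2/2)\,\boldsymbol L$ and killing the latter boundary term by tangency, is the implicit content of the paper's ``immediately.'' So nothing is different in spirit; you have supplied the details the paper omits.
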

The conservation law \eqref{K_L} is a consequence of Lemma \ref{lemma_identity}. The helicity integral \eqref{helicity_integral} immediately comes from \eqref{K_L}.
\\
In particular, the theorem \ref{basic_theorem} implies the following compact form of the conservation laws in the case   $\boldsymbol L=\boldsymbol E_i$,  \,$i=1,2,3$ 
\begin{equation}
		\displaystyle	\frac{\partial}{\partial t}\left(\boldsymbol K^T\frac{\boldsymbol F}{{\rm det\, \boldsymbol F }}\right)+{\rm div}\left\{\left(\boldsymbol u\,\boldsymbol K^T+\left(G-\frac{|\boldsymbol u|^2}{2}\right)\boldsymbol I\right)\frac{\boldsymbol F}{{\rm det\, \boldsymbol F }}\right\}=\boldsymbol 0^T.
	\label{graph1}
\end{equation}
 The analog of conservation law \eqref{graph1}  has already been found   in the modeling of multiphase flows  \cite{Gavrilyuk_Gouin_Perepechko_1998}. 
\section{Euler--van der Waals--Korteweg's  fluids}
We consider compressible fluids endowed with internal capillarity when  the energy per  unit  volume  is a function  of density  $\rho$ and $\nabla\rho$ \cite{Casal1,Eglit,Truskinovsky,Casal2,Gavrilyuk_Shugrin_1996,Gavrilyuk2,Benzoni-Gavage,Haspot_2018,Bresch}. These fluids are  also called Euler--van der Waals--Korteweg's  fluids. For barotropic fluids, the  volume energy  is in the form
\begin{equation*}
W= W(\rho,\nabla\rho).
\end{equation*}  
The governing equations are the Euler-Lagrange equations  associated with the Hamilton action 
\begin{equation*}
a=\int_{t_1}^{t_2}\int_{D_t} \mathcal L\; dt\,dD,  
\end{equation*}
where $t_1, \; t_2$ are given time instants.
 The Lagrangian is 
\begin{equation}
\mathcal{L}=\frac{\rho \vert\boldsymbol u\vert^2}{2}-W(\rho,\nabla\rho)-\rho\,V(t, \boldsymbol x).
\label{EvWK}
\end{equation}
Here $V(t,\boldsymbol x)$  is the potential of external forces.   The mass conservation law
 \begin{equation}
 \frac{\partial \rho}{\partial t}+{\rm div}(\rho\, \boldsymbol{u})=0
 \label{mass}
 \end{equation} 
 is  a constraint. The momentum equation is  the  Euler-Lagrange equation for \eqref{EvWK} 
\begin{equation*}
\frac{\partial	\rho\, \boldsymbol{u}^T}{\partial t}+\text{div}  \left(\rho\,\boldsymbol{u}\otimes \boldsymbol{u}+\boldsymbol{\Pi}\right)+ \rho\,\frac{\partial V}{\partial \boldsymbol x}= \boldsymbol 0^T, \quad {\boldsymbol \Pi}=P\,\boldsymbol I
+\,\dfrac{\partial W}{\partial\nabla\rho}\otimes \nabla\rho \label{motion capillary}
\end{equation*}
 where    pressure  term $P$  is defined as 
\begin{equation*}
P=\rho\frac{\delta W}{\delta\rho}-W \quad {\rm with}\quad \frac{\delta W}{\delta\rho}=\frac{\partial W}{\partial \rho}-{\rm div}\left(\frac{\partial  W}{\partial \nabla \rho}\right) . 
\end{equation*}
The term $\delta W/\delta \rho$\,\ is the variational derivative of $W$ with respect to $\rho$.
The energy conservation law is 
\begin{equation*}
\frac{\partial	e}{\partial t}+ {\rm div}\left(e\boldsymbol  u+{\boldsymbol\Pi} \boldsymbol  u-\dfrac{\partial\rho}{\partial t}\dfrac{\partial W}{\partial\nabla\rho}\right)-\rho\,\frac{\partial V}{\partial t}=0, \label{energy capillary}
\end{equation*}
with the definition of the total volume energy $e$ 
\begin{equation*}
e=\frac{\rho\vert\boldsymbol  u\vert^2}{2}+W+\rho\,V.
\end{equation*}
Due to the mass conservation law \eqref{mass}, the momentum equation can  be written  
\begin{equation*}
\frac{D \boldsymbol u}{D t}+\nabla\left(\frac{\delta W}{\delta \rho}+V\right) =\boldsymbol 0. \label{momentum1}
\end{equation*}
We  use further the notation 
\begin{equation*}
H=\frac{\delta W}{\delta \rho}+V 
\end{equation*}
for the total generalized specific enthalpy. Obviously, the momentum equation can be written in the form  of Eq.  \eqref{K_equation} with $\boldsymbol K=\boldsymbol u$  
\begin{equation}
	\frac{D \boldsymbol u}{D t}+ \left(\frac{\partial \boldsymbol u}{\partial \boldsymbol x}\right)^T \boldsymbol u+\nabla\left(H-\frac{\vert\boldsymbol u\vert^2}{2}\right) =\boldsymbol 0.
	\label{capillary_noncons}
\end{equation}
We denote the vorticity of the capillary fluid by 
\begin{equation*}
\boldsymbol\omega = {\rm curl}\,\boldsymbol u.
\end{equation*}
The vorticity satisfies the Helmholtz equation 
\begin{equation*}
\frac{\partial \boldsymbol \omega}{\partial t}+\frac{\partial \boldsymbol \omega}{\partial \boldsymbol x} {\boldsymbol u}+{\boldsymbol \omega}\,{\rm div} \,{\boldsymbol u}-\frac{\partial \boldsymbol u}{\partial \boldsymbol x} {\boldsymbol \omega}=0.  \label{vorticity}
\end{equation*} 
Applying the theorem  \ref{basic_theorem},  we obtain 
\begin{theorem}
	$1^o)$
Equations of capillary fluids \eqref{capillary_noncons} admit the following conservation law  
	\begin{equation*}
	\frac{\partial}{\partial t}\left({\boldsymbol u}^T \,{\boldsymbol \omega}\right)+{\rm div}\left\{\left({\boldsymbol u}^T{\boldsymbol \omega}\right){\boldsymbol u}+ \left(H-\frac{\vert\boldsymbol u\vert^2}{2}\right){\boldsymbol \omega}\right\}=0\label{helicity1}
	\end{equation*}
$2^o)$ Consider a material domain $D_t$ of boundary $S_t$. If   at $t=0$ the vector field  $\boldsymbol{\omega}$ is tangent to $S_0$, then  for any time $t$ the vector field $\boldsymbol{\omega}$ is tangent to $S_t$ and  the helicity
\begin{equation*}
	{\mathcal H}=\iiint_{D_t}\boldsymbol{u}^T\boldsymbol{\boldsymbol \omega} \, dD 
\end{equation*}
keeps a constant value along the  motion.\\ The results remain true if we replace $\boldsymbol \omega$ by   vectors $\displaystyle\boldsymbol E_i$, $i=1,2,3$.
\end{theorem}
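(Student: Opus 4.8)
The plan is to obtain this theorem as an immediate specialization of Theorem \ref{basic_theorem}, taking $\boldsymbol K=\boldsymbol u$, $\boldsymbol L=\boldsymbol\omega={\rm curl}\,\boldsymbol u$ and $G=H$. All that is really needed is to check that these three choices satisfy the structural hypotheses of that theorem; the conservation law, the tangency statement and the constancy of the helicity integral then follow with no further computation.

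First I would record that the momentum equation, already written in the form \eqref{capillary_noncons}, \emph{is} equation \eqref{K_equation} with $\boldsymbol K=\boldsymbol u$ and $G=H$ --- this identification was carried out above using the mass conservation law \eqref{mass} and the relation $\bigl(\partial\boldsymbol u/\partial\boldsymbol x\bigr)^{T}\boldsymbol u=\nabla(|\boldsymbol u|^{2}/2)$. Next I would verify the two hypotheses on $\boldsymbol L=\boldsymbol\omega$. The divergence-free property is automatic, ${\rm div}\,{\rm curl}\,\boldsymbol u=0$. For the Helmholtz equation I would rewrite \eqref{capillary_noncons}, using the three-dimensional identity $(\partial\boldsymbol u/\partial\boldsymbol x)\,\boldsymbol u=\nabla(|\boldsymbol u|^{2}/2)-\boldsymbol u\times\boldsymbol\omega$, in the Lamb form $\partial_{t}\boldsymbol u-\boldsymbol u\times\boldsymbol\omega+\nabla\!\bigl(H+|\boldsymbol u|^{2}/2\bigr)=\boldsymbol 0$, take the curl, and expand ${\rm curl}(\boldsymbol u\times\boldsymbol\omega)$ by the standard vector identity; since ${\rm div}\,\boldsymbol\omega=0$, this collapses to exactly the vorticity equation quoted above, that is, to the Helmholtz equation of Theorem \ref{basic_theorem} with $\boldsymbol L=\boldsymbol\omega$.

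With both \eqref{K_equation} and the Helmholtz equation verified, part $1^{o}$ of Theorem \ref{basic_theorem} yields the conservation law of part $1^{o}$ verbatim, because $\boldsymbol K^{T}\boldsymbol L=\boldsymbol u^{T}\boldsymbol\omega$ and $G-|\boldsymbol u|^{2}/2=H-|\boldsymbol u|^{2}/2$. Part $2^{o}$ then delivers both the persistence of the tangency of $\boldsymbol\omega$ to the material boundary $S_{t}$ and the constancy of $\mathcal H=\iiint_{D_{t}}\boldsymbol u^{T}\boldsymbol\omega\,dD$: integrating the conservation law over the material domain $D_{t}$ and applying the transport theorem together with the Gauss--Ostrogradsky formula, every term is accounted for except the flux $\iint_{S_{t}}\bigl(H-|\boldsymbol u|^{2}/2\bigr)\,\boldsymbol\omega^{T}\boldsymbol n\,dS$ across $S_t$ (with $\boldsymbol n$ the unit normal), which vanishes because $\boldsymbol\omega$ stays tangent to $S_{t}$.

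For the variant with $\boldsymbol E_{i}$ in place of $\boldsymbol\omega$, I would invoke the Remark following Lemma \ref{lemma_2}: each column $\boldsymbol E_{i}$ of $\boldsymbol F/{\rm det}\,\boldsymbol F$ is divergence-free and satisfies the Helmholtz equation \eqref{E_equation}, which is precisely the hypothesis on $\boldsymbol L$ in Theorem \ref{basic_theorem}. Applying that theorem once more with $\boldsymbol L=\boldsymbol E_{i}$ produces the compact conservation law \eqref{graph1} and the associated conserved integrals $\iiint_{D_{t}}\boldsymbol u^{T}\boldsymbol E_{i}\,dD$. I do not expect a genuine obstacle in this proof; the only step requiring any care is the vector-calculus reduction of the curl of the Lamb form to the Helmholtz equation, which is classical.
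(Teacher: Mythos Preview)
Your proposal is correct and follows exactly the paper's approach: the paper's proof is the single sentence ``the proof follows immediately from Theorem \ref{basic_theorem} with $\boldsymbol K=\boldsymbol u$ and $\boldsymbol L=\boldsymbol\omega$ or $\boldsymbol L=\boldsymbol E_i$'', relying on the fact (stated just before the theorem) that $\boldsymbol\omega$ satisfies the Helmholtz equation. You supply more detail---the Lamb-form derivation of that Helmholtz equation and the explicit vanishing of the boundary flux---but these are just fleshing out steps the paper takes for granted.
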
 
\medskip
\noindent The proof of the theorem follows immediately from theorem \ref{basic_theorem} with  $\boldsymbol K = \boldsymbol u$ and $\boldsymbol L = \boldsymbol\omega$ or $\boldsymbol L= \boldsymbol E_i$, $i=1,2,3$.
\section{Fluids with internal inertia}
We consider dispersive models  with \textit{internal inertia}  including a non-linear one-velocity model of    bubbly  fluids   with incompressible liquid phase  at small volume concentration of gas bubbles (Iordansky--Kogarko--van Wijngarden's model) \cite{Iordansky,Kogarko,Wijngaarden}, and Serre--Green--Naghdi's  equations (SGN eqations) describing long free--surface gravity waves \cite{Salmon,Serre_1953,Green_Naghdi_2}.

 As usually, the mass conservation law \eqref{mass} is a constraint. 
 The momentum equation is the  Euler-Lagrange equation  for the constrained action functional  \cite{Gavrilyuk_Shugrin_1996,Gavrilyuk2,Gavrilyuk_2011} 
\begin{equation*}
a=\int_{t_1}^{t_2}\int_{D_t} {\mathcal L}\; dt\,dD,   
\end{equation*}
where the  Lagrangian is 
\begin{equation*}
{\mathcal L}=\frac{\rho \vert\boldsymbol u\vert^2}{2}-W\left(\rho,\frac{D\rho}{Dt}\right)-\rho\,V(t,\boldsymbol x).
\end{equation*}
Here  $\displaystyle W\left(\rho,\frac{D\rho}{Dt}\right)$ is  a  potential depending not only on the density, but also on the  material time derivative  of the density. We call {\it internal inertia effect} such a dependence on the material time derivative.\\ The conservative form of the momentum equation  is the Euler--Lagrange equation 
\begin{equation}
\frac{\partial	\rho\, \boldsymbol{u}^T}{\partial t}+\text{div}  \left(\rho\,\boldsymbol{u}\otimes \boldsymbol{u}+p\,\boldsymbol I
\right)+\rho \frac{\partial V}{\partial \boldsymbol x}= \boldsymbol 0^T,  \label{inertia effect}
\end{equation}
where pressure $p$ is  defined as 
\begin{eqnarray}
p=\rho\,\frac{\delta W}{\delta\rho}-W\qquad\qquad \qquad\qquad\notag\\
{\rm with}\notag\qquad\qquad \qquad\qquad\qquad\qquad \qquad\qquad\qquad\qquad \qquad\qquad\\
 \frac{\delta W}{\delta\rho}=\frac{\partial W}{\partial \rho}-\frac{\partial }{\partial t}\left(\frac{\partial W}{\partial \displaystyle\left(\frac{D\rho}{Dt}\right)}\right)-{\rm div} {\left(\frac{\partial W}{\partial \displaystyle\left(\frac{D\rho}{Dt}\right)}\,\boldsymbol u\right)}.
\label{pressure_inertia}\
\end{eqnarray}
The energy conservation law is 
\begin{equation*}
\frac{\partial	e}{\partial t}+ {\rm div}\left(\left(e+p\right)\boldsymbol  u\right)-\rho\, \frac{\partial V}{\partial t}=0,  \label{energy bubbly}
\end{equation*}
with  the total volume energy $e$ 
\begin{equation*}
e=\frac{\rho\vert\boldsymbol  u\vert^2}{2}+E+\rho V\quad {\rm where}\quad E=W-\frac{D\rho}{Dt} \,\left(\frac{\partial W}{\partial \displaystyle\left(\frac{D\rho}{Dt}\right)}\right). 
\end{equation*}
The momentum and conservation laws are consequences of the invariance of   Lagrangian under space and time shifts. The momentum equation can also be written as    
\begin{equation*}
\frac{D\boldsymbol  u}{Dt}+\frac{\nabla p}{\rho}+\nabla V=\boldsymbol 0.
\label{momentum_inertia}
\end{equation*} 
One introduces  the vector field $\boldsymbol K$ defined as ( see \cite{Gavrilyuk_Teshukov} )   
\begin{equation}
{\boldsymbol K} ={\boldsymbol u}+ \frac{\nabla \sigma}{\rho} \qquad {\rm where}\qquad \sigma =-\rho\,\left(\frac{\partial W}{\partial \displaystyle\left(\frac{D\rho}{Dt}\right)}\right). 
\label{K_vector}
\end{equation}
Considering  the volume internal energy $E(\rho,\tau)$ as a partial Legendre transform of $W(\rho,\dot\rho)$ 
\begin{equation*}
E(\rho,\tau)=W-\frac{D\rho}{Dt}\, \left(\frac{\partial W}{\partial \displaystyle\left(\frac{D\rho}{Dt}\right)}\right)=W+\frac{D\rho}{Dt}\,\tau   \quad {\rm where}\quad \tau =- \left(\frac{\partial W}{\partial \displaystyle\left(\frac{D\rho}{Dt}\right)}\right),
\end{equation*}
and defining  $\tilde E(\rho, \sigma)$ as $\tilde E(\rho,\sigma)=E(\rho,\sigma/\rho)$,  
the momentum equation becomes \cite{Gavrilyuk_Teshukov} 
\begin{equation}
 \frac{D\boldsymbol  K}{D
 	t}+ \left(\frac{\partial \boldsymbol u}{\partial \boldsymbol x}\right)^T	{\boldsymbol K} +\nabla\left(\tilde E_\rho+V-\frac{\vert\boldsymbol u\vert^2}{2} \right)  =0, 
 \label{momentum30}
\end{equation}
with $\displaystyle \tilde E_\rho = \frac{\partial \tilde E}{\partial \rho}(\rho,\sigma)$. Equation \eqref{momentum30} is in the form \eqref{K_equation}.\\

\noindent The notion of generalized vorticity for bubbly fluids was introduced in \cite{Gavrilyuk_Teshukov}. This generalized vorticity is   defined by 
\begin{equation*}
	\boldsymbol\Omega = {\rm curl}\,\boldsymbol K.
\end{equation*}
It satisfies the Helmholtz equation
\begin{equation*}
	\frac{\partial \boldsymbol \Omega}{\partial t} 
	+ {\rm curl}\,(\boldsymbol\Omega\times {\boldsymbol u})=\boldsymbol 0. 
	\label{Helmholtz}
\end{equation*}
Thus, we obtain 
\begin{theorem}
	\label{bubbly_fluids}
$1^o)$	The equations of fluids with internal inertia   \eqref{inertia effect} admit the conservation law 
	\begin{equation*}
	\frac{\partial}{\partial t}\left({\boldsymbol K}^T \,{\boldsymbol \Omega}\right)+{\rm div}\left\{\left({\boldsymbol K}^T{\boldsymbol \Omega}\right){\boldsymbol u}+ \left( \tilde E_\rho+V-\frac{\vert\boldsymbol u\vert^2}{2} \right){\boldsymbol \Omega}\right\}=0\label{helicity2}
	\end{equation*}
$2^o)$ Consider a material domain $D_t$ of boundary $S_t$. If at $t=0$ the vector field  $\boldsymbol{\Omega}$ is tangent to $S_0$, then  for any time $t$ the vector field $\boldsymbol{\Omega}$ is tangent to $S_t$ and the helicity
\begin{equation*}
	{\mathcal H}=\iiint_{D_t}\boldsymbol{K}^T\boldsymbol{\boldsymbol \Omega} \, dD 
\end{equation*}
 keeps a constant value along the  motion.\\ The results remain true if we replace $\boldsymbol \Omega$ by the vectors $\boldsymbol E_i$, $i=1,2,3$.
\end{theorem}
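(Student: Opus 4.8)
The plan is to read Theorem \ref{bubbly_fluids} off Theorem \ref{basic_theorem}, with the identifications $\boldsymbol K$ as in \eqref{K_vector}, $G=\tilde E_\rho+V$, and $\boldsymbol L$ taken to be either $\boldsymbol\Omega={\rm curl}\,\boldsymbol K$ or one of the fields $\boldsymbol E_i$, $i=1,2,3$. Thus the entire argument reduces to checking that, for these choices, the two hypotheses of Theorem \ref{basic_theorem} are satisfied.

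First I would verify the hypothesis on $\boldsymbol K$, i.e.\ equation \eqref{K_equation}. Starting from the conservative momentum equation \eqref{inertia effect} with the pressure \eqref{pressure_inertia}, one uses the mass conservation law \eqref{mass}, introduces the potential $\sigma=-\rho\,\partial W/\partial(D\rho/Dt)$ of \eqref{K_vector} together with the partial Legendre transform $\tilde E(\rho,\sigma)$; this is the computation of \cite{Gavrilyuk_Teshukov}, and it brings the momentum equation to the form \eqref{momentum30}, which is precisely \eqref{K_equation} with $G=\tilde E_\rho+V$. Hence this hypothesis requires nothing beyond recalling that derivation.

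Next I would verify the hypothesis on $\boldsymbol L$, namely that $\boldsymbol L$ is divergence-free and that $d_{L_2}\boldsymbol L=0$. For $\boldsymbol L=\boldsymbol\Omega={\rm curl}\,\boldsymbol K$, being divergence-free is automatic, and the Helmholtz equation follows by applying ${\rm curl}$ to \eqref{momentum30}: the gradient term drops out, and since ${\rm curl}$ intertwines $d_{L_1}$ acting on the covector $\boldsymbol K^T$ with $d_{L_2}$ acting on the two-form ${\rm curl}\,\boldsymbol K$ (Appendix \ref{Appendix A}), one obtains $\partial\boldsymbol\Omega/\partial t+{\rm curl}(\boldsymbol\Omega\times\boldsymbol u)=0$, i.e.\ $d_{L_2}\boldsymbol\Omega=0$. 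For $\boldsymbol L=\boldsymbol E_i$, both required properties are exactly \eqref{E_equation}, established in Lemma \ref{lemma_1} and the Remark following it, independently of the dynamics.

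With both hypotheses in hand, part $1^o$ is Theorem \ref{basic_theorem} applied verbatim, which yields the conservation law \eqref{helicity2} (and, for $\boldsymbol L=\boldsymbol E_i$, the compact form \eqref{graph1} with this $\boldsymbol K$ and $G$). Part $2^o$ is the second assertion of Theorem \ref{basic_theorem}: the tangency of $\boldsymbol L$ to the material boundary is transported by the flow, and the constancy of $\mathcal H$ follows by integrating \eqref{helicity2} over $D_t$, applying Reynolds' transport theorem together with the divergence theorem, and observing that the only surviving boundary term, $\iint_{S_t}\left(\tilde E_\rho+V-|\boldsymbol u|^2/2\right)\boldsymbol\Omega^T\boldsymbol n\, dS$, vanishes because $\boldsymbol\Omega$ (respectively $\boldsymbol E_i$) is tangent to $S_t$. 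I do not anticipate any genuine obstacle here: the only step that is more than bookkeeping is the passage from \eqref{inertia effect} to \eqref{momentum30}, and that is already recorded above and in \cite{Gavrilyuk_Teshukov}.
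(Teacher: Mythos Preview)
Your proposal is correct and follows exactly the paper's route: the paper's entire proof is the single sentence ``The proof is an immediate application of Theorem \ref{basic_theorem},'' relying on the facts already recorded just before the statement (namely \eqref{momentum30} for $\boldsymbol K$ and the Helmholtz equation for $\boldsymbol\Omega$, together with \eqref{E_equation} for the $\boldsymbol E_i$). You have simply spelled out the identifications $G=\tilde E_\rho+V$, $\boldsymbol L=\boldsymbol\Omega$ or $\boldsymbol E_i$, and the verification of the two hypotheses more explicitly than the paper does, but there is no difference in approach.
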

The proof is an immediate application of Theorem \ref{basic_theorem}.
\\

Now, we consider applications to the Serre--Green--Naghdi  equations.
The SGN  equations were derived  in \cite{Salmon,Serre_1953,Green_Naghdi_2}. The equations represent a two--dimensional asymptotic reduction of  full free--surface Euler equations for  long gravity waves. A mathematical justification of this model is presented  in \cite{Lannes_2013, Duchene_Israwi_2018}. Its numerical study can be found  in \cite{GNsarah,Lannes_Marche_2016, Favrie_Gavrilyuk_2017,Duchene_Klein_2022}. The corresponding potential $W$ is  given in 
\cite{Salmon,Gavrilyuk_Shugrin_1996,Gavrilyuk_Teshukov,Gavrilyuk_2011,Camassa_1997}  
\begin{equation*}
W(h,\dot{h})=\frac{g\,h^2}{2}-\frac{h}{6}\left(\frac{Dh}{Dt}\right)^2  \quad{\rm with}\quad \frac{Dh}{Dt} =\frac{\partial h}{\partial t}+\nabla h\cdot{\boldsymbol u},
\end{equation*}
where  $g$ is the  constant gravity acceleration, $h$ is the  water depth and $\boldsymbol u $ is  the depth averaged horizontal velocity. For the SGN  equations, we have only to  replace  $\rho$ by the  water depth $h$  in   potential  $\displaystyle W\left(\rho,\frac{D\rho}{Dt}\right)$.  The water depth $h$ and depth averaged velocity $\boldsymbol u$ are functions of  time $t$ and of  the horizontal  coordinates ${\boldsymbol x}=(x^1,x^2)^T$. The vector  $\boldsymbol K$ defined by  \eqref{K_vector} can be written in the form
\begin{equation*}
{\boldsymbol K}={\boldsymbol u}-\frac{1}{h}\,\nabla \left( h\,\frac{\partial W}{\displaystyle\partial \left(\frac{Dh}{Dt}\right)} \right)={\boldsymbol u}-\frac{1}{3h}\nabla \left(h^2\,\frac{Dh}{Dt} \right)={\boldsymbol u}+\frac{1}{3h}\nabla \left(h^3\, {\rm div}{\boldsymbol u }\right).
\end{equation*}
The physical meaning of  ${\boldsymbol K}$ is quite interesting  : ${\boldsymbol K}$ is  the fluid velocity tangent to  the free surface \cite{Gavrilyuk_2015, Matsuno_2016}.
Since $\displaystyle\sigma= \frac{h^2}{3}\frac{Dh}{Dt}$, the energy expressed in terms of $h$ and $\sigma $ is 
\begin{equation*}
\tilde E(h,\sigma)=W-\frac{Dh}{Dt}\, \left(\frac{\partial W}{\displaystyle\partial \left(\frac{Dh}{Dt}\right)}\right)=
\frac{g\,h^2}{2}+\frac{9\,\sigma^2}{6h^3}
\end{equation*}
Hence
\begin{equation*}
\tilde E_h(h,\sigma)=g\,h-\frac{9\,\sigma^2}{2\,h^4}=g\,h-\frac{1}{2}\,\left(\frac{Dh}{Dt}\right)^2 
\end{equation*}
and the equation for  $\boldsymbol K$ is 
\begin{equation*}
\frac{D\boldsymbol K}{Dt}+ \left(\frac{\partial \boldsymbol u}{\partial \boldsymbol x}\right)^T	{\boldsymbol K} +\nabla\left(g\,h-\frac{1}{2}\left(\frac{Dh}{Dt}\right)^2 -\frac{\vert\boldsymbol u\vert^2}{2} \right) ={\boldsymbol 0}.
\end{equation*}
 \begin{figure}
	\begin{center}
		\includegraphics[scale=1]{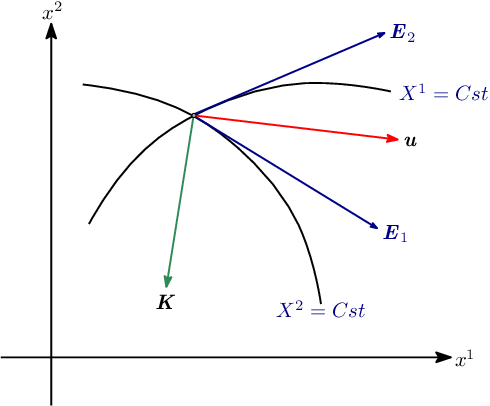}
	\end{center}
	\caption{The  material  curves  $X^1(t, x^1,x^2)=X^{10}$ and $X^2(t, x^1,x^2)=X^{20}$ corresponding to coordinate lines $X^1=X^{10} =Cst$ and $X^2=X^{20}= Cst$ are drawn. At any point, these curves are tangent to the vectors $\boldsymbol E_2$ and $\boldsymbol E_1$.  {\it A priori},  vectors $\boldsymbol u$ and $\boldsymbol K$  are not collinear.}
	\label{Fig.2}
\end{figure}
If theorem \ref{bubbly_fluids} is applied with the vorticity vector $\boldsymbol \Omega$, the result is trivial because  $\boldsymbol \Omega$ and $\boldsymbol K$ are orthogonal.  However, the result is not trivial if we use, instead of vector $\boldsymbol \Omega$, the column vectors ${\boldsymbol E}_i,$ $i=1,2$, of the matrix $\displaystyle {\boldsymbol F}/{{\rm det\, \boldsymbol F }}$   
	\begin{equation*}
	\frac{\partial}{\partial t}\left(\boldsymbol K^T\boldsymbol{E}_i\right)+{\rm div}\left(\boldsymbol u\,\left(\boldsymbol K^T{\boldsymbol E}_i\right)+\left(g\,h-\frac{1}{2}\left(\frac{Dh}{Dt}\right)^2-\frac{|\boldsymbol u|^2}{2}\right){\boldsymbol E}_i\right)=0\label{E1E2}.
\end{equation*}
This equation can be interpreted as  conservation laws for the covariant components of $\boldsymbol K$ in the basis  $\boldsymbol E_i$, $i=1,2$   (see Fig \ref{Fig.2}). 


\section{Conclusion and discussion}
We have found new conservation laws for the equations of dispersive fluids where the energy depends on the density and  gradient of density (capillary fluids), or depends on the density and material time  derivative of the density (fluids with internal inertia). These conservation laws keep  constant  generalized helicity.   

We have also found new  integrals representing the variation of  the components of velocity $\boldsymbol u$ (for capillary fluids) or of  vector $\boldsymbol K$ (for fluids with internal inertia)  in a time--dependent  curvilinear basis ${\boldsymbol E}_i$, $i=1,2,3$. 	These divergence--free basis vectors are tangent at each time  to   material  curves along which coordinate  $X^i$ varies and other coordinates $X^j,\,\ j\neq i$ are fixed.

Other invariants can be found in the case when the potential $W$ depends on additional scalar transported by the fluid such as  the specific entropy $\eta$. Indeed, $\eta$ satisfies the equation 
\begin{equation}
\frac{D\eta}{Dt}=0.
\label{entropy}
\end{equation}  
The analog of Ertel's invariant is  
\begin{equation*}
\frac{D}{Dt}\left(\frac{\partial \eta}{\partial \boldsymbol x}\frac{\boldsymbol L}{\rho}\right)=0
\end{equation*}
where  $\boldsymbol L$ is  either the scaled basis vectors $\boldsymbol E_i$,  $i=1,2,3$\,\ or the {\it generalized} vorticity vector. Indeed, even if the equation for the  vorticity contains an extra term related to the entropy gradient, it vanishes by combining  the Helmholtz equation for the {\it generalized} vorticity with the  Lie derivative of   covector $\displaystyle \frac{\partial \eta}{\partial \boldsymbol x}$  coming from  \eqref{entropy} (see Appendix \ref{Appendix A} for details) 
\begin{equation*}
d_{L_1} \left(\frac{\partial \eta}{\partial \boldsymbol x}\right)\equiv\frac{D}{Dt}\left(\frac{\partial \eta}{\partial \boldsymbol x}\right)+\frac{\partial \eta}{\partial \boldsymbol x}\frac{\partial \boldsymbol u}{\partial \boldsymbol x}=0.
\end{equation*}
In the same way, higher order conservation laws can be found  as it was already done for the classical dispersionless equations (Euler equations of compressible and incompressible fluids, equations of magnetohydrodynamics, two-fluid plasma equations etc.) \cite{Tur_1993,Cotter_2013,Webb_2014_II,Cheviakov_2014}. However, the physical and geometrical meaning of these conservation laws still remains to be done. 
 \medskip
 
 {\bf Acknowledgments} The authors thank Boris Haspot for helpful  comments and discussions. S.G.  would like to thank the Isaac Newton Institute for Mathematical Sciences, Cambridge, for support and hospitality during the programme ``Dispersive hydrodynamics: mathematics, simulation and experiments, with applications in nonlinear waves'' where work on this paper was undertaken. This work was supported by EPSRC grant no EP/R014604/1.

 \appendix
 \section{Lie  derivatives of one-form and two-form fields}
 \label{Appendix A}
 
In the literature, one generally uses the notation $\mathcal L_{\boldsymbol v}$ where ${\boldsymbol v}$ is a vector field. However, the analytical expression of the Lie derivative is different for scalars, one--forms, two--forms and so on. This is why we denote $d_{L_1}$ for the Lie derivative of a one--form and $d_{L_2}$ for the Lie derivative of a two--form \cite{Gouin 2023}.\\

We denote by  $T_{\boldsymbol x} (\mathcal D_t)$ and $ T_{\boldsymbol x}^\star (\mathcal D_t)$ the tangent and cotangent vector spaces at any point $\boldsymbol x$ of a material domain $\mathcal D_t$. A vector field $\boldsymbol J \in T_{\boldsymbol x} (\mathcal D_t)$ has the inverse image $\boldsymbol J_0 \in T_{\boldsymbol X} (\mathcal D_0)$ such that 
\begin{equation}
	\boldsymbol J(t,\boldsymbol x)= \boldsymbol F\, \boldsymbol J_0(t,\boldsymbol X)\label{vectorfield}.
\end{equation}
 Consequently, a one-form field $\boldsymbol C \in T_{\boldsymbol x}^\star (\mathcal D_t)$  has the inverse image $\boldsymbol C_0 \in T_{\boldsymbol X}^\star (\mathcal D_0)$  such that 
\begin{equation*}
\boldsymbol C(t,\boldsymbol x)=  \boldsymbol C_0(t,\boldsymbol X)\, \boldsymbol F^{-1} \label{formfield}.
\end{equation*}
The Lie derivative of $\boldsymbol C$ is  the image of\,\ $\displaystyle {\partial\boldsymbol C_0(t,\boldsymbol X)}/{\partial t}$\,\ in $T_{\boldsymbol x}^\star (\mathcal D_t)$ and is denoted by $d_{L_1}$. From the evolution equation \eqref{derF} for $\boldsymbol F$,  the
Lie derivative $d_{L_1}$ of  form field $\boldsymbol C$ is deduced from the
diagram 
\begin{equation*}
	\begin{array}{ccc}
		\boldsymbol C\in T^{\ast }_{\boldsymbol x}      (\mathcal{D}_{t}) &
		\begin{array}{c}
		{\boldsymbol F}
		  \\  
			\longrightarrow \\
			\\
		\end{array}
		& \boldsymbol C_0=\boldsymbol C\,\boldsymbol F\in T^{\ast }_{\boldsymbol X}(\mathcal{D}_{o}) \\
		\begin{array}{c}
			\displaystyle d_{L_1}\,\Big\downarrow \\
		\end{array}
		&  &
		\begin{array}{c}
			\displaystyle\Big\downarrow\, \displaystyle\frac{D}{Dt} \\
		\end{array}
		\\
		\begin{array}{c}
			\\
			\displaystyle\frac{D\boldsymbol C}{Dt}+\boldsymbol C\,\displaystyle\frac{\partial \boldsymbol{u}}{\partial
				\boldsymbol	x}\in T^{\ast }_{\boldsymbol x}(\mathcal{D}_{t})
		\end{array}%
		\ \ \ \ \ \  &
		\begin{array}{c}
			{\boldsymbol F}^{-1}
		 \\
			\longleftarrow \\
		\end{array}
		&
		\begin{array}{c}
			\ \ \ \ \ \ \ \  \\
			\ \ \ \displaystyle\frac{D\boldsymbol C}{Dt}\boldsymbol F+\boldsymbol C\, \displaystyle\frac{\partial \boldsymbol{u}}{%
				\partial \boldsymbol x} \boldsymbol F\in T^{\ast }_{\boldsymbol X}(\mathcal{D}_{o})%
		\end{array}%
	\end{array}
	\label{graph}
\end{equation*}
\bigskip

\noindent 
Consequently, the Lie derivative of $\boldsymbol{C}$ is 
\begin{equation*}
	d_{L_1} \boldsymbol C =  {\frac{{D \boldsymbol C}}{Dt}}+\boldsymbol C \,  \frac{\partial \boldsymbol u}{\partial \boldsymbol{x}}\equiv\frac{\partial \boldsymbol{C}}{\partial t}+\boldsymbol u^T\left(\frac{\partial \boldsymbol C^T}{\partial \boldsymbol{x}}\right)^T +\boldsymbol C \,  \frac{\partial \boldsymbol u}{\partial \boldsymbol{x}}.
\end{equation*}

A two-form field $\boldsymbol \omega \in T_{\boldsymbol x}^{2\star} (\mathcal D_t)$ has the inverse image $\boldsymbol \omega_0 \in T_{\boldsymbol X}^{2\star}  (\mathcal D_0)$.  There exists an isomorphism $\boldsymbol \omega(t, \boldsymbol{x})\in T_{\boldsymbol{x}}^{2\star}(\mathcal{D}_t) \longrightarrow \boldsymbol {w}(t, \boldsymbol{x})\in
T_{\boldsymbol{x}}(\mathcal{D}_t) $ defined as  
\begin{equation*}
	\forall\ \boldsymbol J_1\ \text{and}\  \boldsymbol J_2\in
	T_{\boldsymbol{x}}(\mathcal{D}_t),\quad \boldsymbol \omega\, (\boldsymbol J_1, \boldsymbol J_2)=\text{det}\,(\boldsymbol{w}, \boldsymbol J_1, \boldsymbol J_2).\label{2form}
\end{equation*}
We can define a vector field $\boldsymbol w_0$ such that:  
\begin{equation*}
	\text{det}\,\left(\boldsymbol{w}, \boldsymbol J_1, \boldsymbol J_2\right) = \text{det}\,(\boldsymbol{w_0}, \boldsymbol J_{10}, \boldsymbol J_{20}) 
\end{equation*}
where $\boldsymbol J_{10}$ and  $\boldsymbol J_{20}$ are the inverse image in $	T_{\boldsymbol{X}}(\mathcal{D}_0)$ of $\boldsymbol J_{1}$  and  $\boldsymbol J_{2}$ which verify \eqref{vectorfield}. Similarly  to the proof of lemma 1, we obtain 
\begin{equation*}
	\boldsymbol{w} \,(t, \boldsymbol{x}) =\frac{\boldsymbol{F}}{\text{det}\,\boldsymbol{F}} \,\boldsymbol w_0\,(t, \boldsymbol{X})\qquad {\rm or}\qquad \boldsymbol  w_0\,(t, \boldsymbol{X})=({\text{det}\,\boldsymbol{F}}) \,\boldsymbol{F}^{-1}	\boldsymbol{w} \,(t, \boldsymbol{x}).
\end{equation*}
The Lie derivative $d_{L_2}$ of  $\boldsymbol \omega (t,\boldsymbol x)$ is isomorphic  to  the image   of $\displaystyle\frac{\partial\boldsymbol w_0(t,\boldsymbol X)}{\partial t}$\,\ 
and we   deduce from \eqref{derF} 
\begin{equation*}
\text{det}\,\left(d_{L_2}{\boldsymbol{w},\boldsymbol{J}_1,\boldsymbol{J}_2}\right)= \text{det}\,\left(\frac{D\boldsymbol{ w}}{Dt}+\boldsymbol{w}\, \text{div}\,\boldsymbol u -\frac{\partial\boldsymbol u}{\partial\boldsymbol{x}}\, \boldsymbol{w},\boldsymbol{J}_1,\boldsymbol{J}_2\right), 
\end{equation*}
or
\begin{equation*}
d_{L_2}{\boldsymbol{w}}= \frac{D\boldsymbol{ w}}{Dt}+\boldsymbol{w}\, \text{div}\,\boldsymbol u -\frac{\partial\boldsymbol u}{\partial\boldsymbol{x}}\, \boldsymbol{w}.\label{Lie2}
\end{equation*}
Let us note that  $d_{L_1}\boldsymbol C$ and $d_{L_2}\boldsymbol w$ are zero if and only if $\boldsymbol C_0$ and $\boldsymbol w_0$ do not depend on time $t$ (they depend only on $\boldsymbol X$). A typical example of the two-form field is the vorticity vector. Another important example is the cross  product of two one-forms   $\ \displaystyle \frac{\partial \alpha}{\partial \boldsymbol x}\times\frac{\partial \beta}{\partial \boldsymbol x}$, where $ \alpha$ and $\beta$ are two scalar fields \cite{Gouin 2023}.

	\end{document}